\documentclass[11pt]{amsart}
\usepackage{amssymb}
\usepackage{amsfonts}
\usepackage{color}
\usepackage{xy}
\xyoption{all}

\usepackage{setspace}
\linespread{1.2}



\setlength{\textheight}{225mm} 
\setlength{\topmargin}{0.46cm}
\setlength{\textwidth}{150mm} 
\setlength{\oddsidemargin}{0.46cm}
\setlength{\evensidemargin}{0.46cm}
\setlength{\parindent}{0pt}
\newtheorem{theorem}{Theorem}[section]
\newtheorem{proposition}[theorem]{Proposition}
\newtheorem{lemma}[theorem]{Lemma}
\newtheorem{corollary}[theorem]{Corollary}

\theoremstyle{remark}

\renewenvironment{proof}{{\noindent\bf Proof.}}{\hfill $\Box$\par\vskip3mm}

\def\NN{{\mathbb N}}

\begin{document}
\title{The Splitting Problem for Coalgebras: a Direct Approach}

\begin{abstract}
In this note we give a different and direct short proof to a previous result of Nastasescu and Torrecillas in \cite{NT} stating that if the rational part of any right $C^*$ module $M$ is a direct sumand in $M$ then $C$ must be finite dimensional (the splitting problem for coalgebras). 
\end{abstract}

\author{Miodrag Cristian Iovanov}
\thanks{2000 \textit{Mathematics Subject Classification}. Primary 16E40;
Secondary 16W30}
\thanks{$^1$}
\date{}
\keywords{Torsion Theory, Splitting, Coalgebra}
\maketitle

\section*{Introduction}

Let $C$ be a coalgebra over a field $k$. The category of left (resp. right) $C$-comodules is a full subcategory of the category of right (resp. left) modules over the dual algebra. In \cite{NT} it was shown that the rational part of every right $C^*$-module M is a direct summand in M if and only if C is finite dimensional. In this case, the category of rational right $C^*$-modules is equal to the category of right $C^*$-modules, and also to the Dickson subcategory of $M^C$. The aim of this note is to give a new and elementary proof of this result, based on general results on modules and comodules, and an old result of Levitzki, stating that a nil ideal in a right noetherian ring is nilpotent. The proof of Na\u as\u asescu and Torrecillas from \cite{NT} involve several techniques of general category theory (such as localization), some facts on linearly compact modules and is based on general nontrivial and profound results of Teply regarding the general splitting problem (see \cite{T1,T3}). We first prove that if C has the splitting property, that is, the rational part of every right $C^*$.-module is a direct summand, then C has only a finite number of isomorphism types of simple (left or right) comodules. We then observe that the injective envelope of every right comodule contains only finite dimensional proper subcomodules. This immediately implies that $C$ is right noetherian. Then, using a quite common old idea from Abelian group theory we use the hypothesis for a direct product of modules to obtain that every element of J, the Jacobson radical of $C$, is nilpotent. Finaly, using a well known result in noncommutative algebra due to Levitzki, we conclude that J  is nilpotent wich combined with the above mentioned key observation immediately yields that C is finite dimensional

\section{Splitting Problem}

For an $f\in C^*$, put $\overline{f}:C\rightarrow C$, $\overline{f}(x)=f(x_1)x_2$; then $\overline{f}$ is a morphism of right $C$ comodules. As a key technique, we make use of the algebra isomorphism $C^*\simeq{\rm Hom}(C^C,C^C)$ given by $f\mapsto \overline{f}$ (with inverse $\alpha\mapsto\varepsilon\circ\alpha$), where ${\rm Hom}(C^C,C^C)$ is a ring with opposite composition. Also if $T$ is a simple right $C$ subcomodule of $C$, $E(T)\subseteq C$ an injective envelope of $T$ and $C=E(T)\oplus X$ as right $C$ comodules. As $C^*\simeq E(T)^*\oplus X^*$, we identify the any element $f$ of $E(T)^*$ with the one of $C^*$ equal to $f$ on $E(T)$ and $0$ on $\Lambda$.

\begin{lemma}\label{1.fin}
If $T$ is a simple right comodule and $E(T)$ is the an injective envelope of $T$, then $E(T)$ contains only finite dimensional proper subcomodules. 
\end{lemma}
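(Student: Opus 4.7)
The plan is to argue by contradiction using the splitting hypothesis. Suppose that $E(T)$ contains a proper subcomodule $N$ with $\dim_k N=\infty$. Since every finitely generated subcomodule of a coalgebra is finite-dimensional, inside $N$ one can extract a strictly ascending chain $0\subsetneq N_1\subsetneq N_2\subsetneq\cdots\subseteq N$ of finite-dimensional subcomodules; because $N\subsetneq E(T)$, each $N_i$ is proper in $E(T)$, and by essentiality of $T$ in $E(T)$ each contains the simple socle $T$. Setting $N_\infty=\bigcup_i N_i\subsetneq E(T)$ gives a proper infinite-dimensional subcomodule filtered by the $N_i$.

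The next step is to exhibit a $C^*$-module whose rational submodule is not a direct summand. A natural candidate, inspired by the classical abelian-group argument for splitting problems, is the product $P=\prod_{i\ge 1}E(T)/N_i$ equipped with the componentwise $C^*$-action. Its rational submodule $P^{\rm rat}$ contains the direct sum $\bigoplus_i E(T)/N_i$ as well as the diagonal image $\Delta(E(T))$, $x\mapsto (x+N_i)_i$, because both are $C^*$-linear images of the rational module $E(T)$. However, $P$ carries genuinely non-rational elements: choosing representatives $m_i\in E(T)\setminus N_i$ so that the subcomodules $(C^*m_i+N_i)/N_i\subseteq E(T)/N_i$ grow without dimensional bound produces a sequence $(m_i+N_i)_i\in P$ whose $C^*$-orbit has infinite dimension and therefore sits outside $P^{\rm rat}$.

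Applying the splitting hypothesis then yields $P=P^{\rm rat}\oplus K$ with $K\neq 0$ and $K^{\rm rat}=0$ (since any rational submodule of $P$ is automatically contained in $P^{\rm rat}$). The contradiction is obtained by producing a non-zero $C^*$-linear map from a rational module into $K$, using the non-rational witness constructed above together with the identification $C^*\cong\End(C^C)^{\rm op}$ and the decomposition $C^*=E(T)^*\oplus X^*$ recalled in the preamble; the resulting rational image inside $K$ contradicts $K^{\rm rat}=0$. The step I expect to be the main obstacle is this descent: converting the non-rational witness and the splitting projection into an explicit rational submodule of $K$, which requires carefully tracking how the $C^*$-action on the diagonal image of $E(T)$ interacts with componentwise projections to the $E(T)/N_i$. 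Once this is carried out, the lemma follows by the indecomposability of $E(T)$ as the injective envelope of a simple comodule and the locality of its endomorphism ring, which prevent any non-trivial $C^*$-splitting of $E(T)$ from existing.
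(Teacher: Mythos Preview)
Your proposal is a plan rather than a proof, and the step you yourself flag as ``the main obstacle'' --- producing a nonzero rational submodule inside the complement $K$ --- is never carried out and, with the ingredients you list, cannot be. Once you write $P=P^{\rm rat}\oplus K$, the complement $K$ automatically satisfies $K^{\rm rat}=0$; any $C^*$-linear map from a rational module into $K$ therefore has zero image, so there is nothing to contradict unless you bring in additional structure. In the later arguments of this paper the product construction does yield a contradiction, but only because one has a fixed element $f\in J$ and can write the test element as $r_n+\mu_n\cdot f^n$ with $r_n$ rational, forcing the $K$-component into $\bigcap_n KJ^n$. Your setup has no analogue of $f$, and the vague appeal to ``the identification $C^*\cong\End(C^C)^{\rm op}$'' and ``the locality of the endomorphism ring of $E(T)$'' does not supply one. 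The final sentence about indecomposability of $E(T)$ is also disconnected from the rest of the argument: nothing you have built produces a nontrivial idempotent on $E(T)$.

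The paper's argument is both simpler and different in kind: no product is needed. One takes a single comodule $F$ with $K\subsetneq F\subseteq E(T)$ and $\dim F/K<\infty$, and works with the cyclic right $C^*$-module $F^*\cong\Hom^C(F,C)$. The key point is that any $u\in F^*$ not vanishing on $T$ corresponds to an \emph{injective} comodule map $v:F\to C$ (by essentiality of $T$), and since $C$ is injective the induced map $\Hom^C(v,C)$ is surjective; hence $u$ generates $F^*$. Splitting $F^*=A\oplus B$ with $A={\rm Rat}\,F^*\neq 0$ then forces both $A$ and $B$ into $T^\perp$, contradicting $\varepsilon|_F\notin T^\perp$. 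You should replace the product construction by this argument; the product idea is used elsewhere in the paper, but for this lemma it is the wrong tool.
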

\begin{proof}
Let $K\subsetneq E(T)$ be an infinite dimensional subcomodule. Then there is a subcomodule $K\subsetneq F \subset E(T)$ such that $F/K$ is finite dimensional. We have an exact sequence of right $C^*$ modules:
$$0\rightarrow (F/K)^* \rightarrow F^* \rightarrow K^* \rightarrow 0$$
As $F/K$ is a finite dimensional rational left $C^*$ module, $(L/K)^*$ is rational right module; thus $A={\rm Rat}F^*\neq 0$. Denote $M=T^\perp\subset F^*$. Take $u\notin M$; this corresponds to some $v\in {\rm Hom}(F,C)$ such that $v\mid_{T}\neq 0$. Then $v$ is injective, because $T$ is an essential submodule of $F\subseteq E(T)$ and if ${\rm Ker}(v)\neq 0$ then ${\rm Ker}(v)\cap T\neq 0$ so ${\rm Ker}(v)\supseteq T$, which contradicts $v\mid_T\neq 0$. As $C$ is an injective right $C$ comodule and $v$ is injective we have a commutative diagram:
$$\xymatrix{
C^* \ar[r]\ar@{=}[d] & F^* \ar[r]\ar@{=}[d] & 0\\ 
{\rm Hom}^C(C,C) \ar[r]_{{\rm Hom}^C(v,C)} & {\rm Hom}^C(F,C)\ar[r] & 0 
}$$
we see that ${\rm Hom}^C(v,c)$ is generated by $v$ as ${\rm Hom}^C(C,C)\simeq C^*$ is generated by $1_C$, following that $F^*$ is generated by any $u\notin M$. Now if $F^*=A\oplus B$, we see that $A$ is finitely generated as $F^*$ is generated so it is finite dimensional, thus $A\neq F^*$ by the initial assumption. But now if $a\in A\setminus M$, $a$ generates $F^*$ so $A=F^*$, and therefore $A\subseteq M$. Also $B\neq F^*$ as $A\neq 0$ so by the same argument $B\subset M$, and therefore $F^*=A+B\subseteq M$, a contradiction ($\varepsilon\mid F\notin M$).
\end{proof}


\begin{proposition}\label{1.simples}
Let $C$ be a coalgebra such that the rational part of every finitely generated left $C^*$ module splits off. Then thare are only a finite number of isomorphism types of simple right $C$ comodules, equivalently, $C_0$ is finite dimensional.
\end{proposition}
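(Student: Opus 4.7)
The strategy is a proof by contradiction. Suppose $\{T_i\}_{i\in\NN}$ is an infinite family of pairwise non-isomorphic simple right $C$-comodules, and let $D_i\subseteq C_0$ denote the simple subcoalgebra corresponding to $T_i$; since distinct iso types give distinct simple subcoalgebras, the $D_i$ are pairwise linearly independent and their sum inside $C_0$ is direct. I plan to exhibit a cyclic left $C^*$-module whose rational part is not a direct summand, contradicting the hypothesis.

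The module in question is $N = C^* m \subseteq M = \prod_{i\in\NN} T_i$, where $M$ carries the coordinatewise left $C^*$-action and $m = (t_i)$ is chosen with every $t_i\neq 0$. A preliminary observation is that ${\rm Rat}\,(M) = \bigoplus_i T_i$: a cyclic rational left $C^*$-module is a cyclic right $C$-comodule, hence finite-dimensional by the fundamental theorem of comodules, and for $(x_i)\in M$ the projection of $C^* (x_i)$ onto the $j$-th factor is either $T_j$ or $0$ by simplicity, so $C^*(x_i)$ is finite-dimensional exactly when the support of $(x_i)$ is finite. Hence ${\rm Rat}\,(N) = N\cap\bigoplus_i T_i$, and the hypothesis provides a decomposition $N = {\rm Rat}\,(N)\oplus K$. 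Writing $m = r + k$ with $r\in{\rm Rat}\,(N)$ and $k\in K$, the finite support of $r$ forces $k$ to have all but finitely many coordinates nonzero.

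The crucial step is the construction, for every finite subset $J\subseteq\NN$, of a functional $f_J\in C^*$ acting as the identity on $T_j$ for $j\in J$ and as zero on $T_j$ for $j\notin J$. Using linear independence of the $D_i$, define a linear map on $\bigoplus_i D_i$ by $f_J|_{D_j} = \varepsilon|_{D_j}$ for $j\in J$ and $f_J|_{D_j} = 0$ for $j\notin J$, and extend arbitrarily to all of $C$; the counit identity $t_0\varepsilon(t_1) = t$ ensures $\varepsilon|_{D_j}$ acts as the identity on $T_j$. With this in hand, $f_J k\in K$ has support contained in $J$, so $f_J k\in K\cap\bigoplus_i T_i \subseteq K\cap {\rm Rat}\,(N) = 0$; reading off the $j$-th coordinate for $j\in J$ gives $k_j = 0$, and varying $J$ yields $k = 0$. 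Then $m = r\in\bigoplus_i T_i$ has finite support, a contradiction. The main obstacle is producing these separating functionals $f_J$ and verifying the identification ${\rm Rat}\,(\prod_i T_i) = \bigoplus_i T_i$; once both are in hand, the contradiction falls out immediately from applying the splitting assumption to the cyclic module $N$.
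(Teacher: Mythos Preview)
Your argument is correct, and it reaches the same contradiction as the paper but by a genuinely different route.

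Both proofs start with the product $M=\prod_i T_i$ and an element $m=(t_i)$ with all $t_i\neq 0$. The paper's key step is to show that $M$ itself is cyclic: using the decomposition $C=\bigoplus_i E(T_i)\oplus X$ into injective envelopes, one has $C^*=\prod_i E(T_i)^*\times X^*$, and for any $y=(y_i)\in M$ one can choose $c_i^*\in E(T_i)^*$ with $c_i^*\cdot t_i=y_i$ and assemble these into a single $c^*\in C^*$ with $c^*\cdot m=y$. Then $M=C^*m$ is finitely generated, so ${\rm Rat}(M)$ splits off and is itself finitely generated, hence finite-dimensional; but $\bigoplus_i T_i\subseteq{\rm Rat}(M)$, a contradiction. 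The paper never needs to identify ${\rm Rat}(M)$ precisely.

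You instead stay inside the cyclic submodule $N=C^*m$, identify ${\rm Rat}(M)=\bigoplus_i T_i$ (via the observation that a finite-dimensional module can have only finitely many pairwise non-isomorphic simple quotients), and then use the separating functionals $f_J$ built from $\varepsilon$ on the simple subcoalgebras $D_j\subseteq C_0$ to kill the complement $K$ coordinate by coordinate. Your approach is a bit more elementary in that it only uses the block decomposition of the coradical $C_0=\bigoplus_i D_i$ rather than the finer decomposition of $C$ into injective envelopes, and it sidesteps the (not entirely obvious) claim that the whole product is cyclic. The paper's route, by contrast, yields the stronger intermediate statement $M=C^*m$ and finishes in one line without computing ${\rm Rat}(M)$.
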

\begin{proof}
Let $(S_i)_{i\in I}$ be the set of representatives for the simple right comodules and $\Sigma=\bigoplus\limits_{i\in I}S_i$. Then there is an injection $\Sigma\hookrightarrow C$ and we can consider $E(S_i)$ an injective envelope of $S_i$ contained in $C$. Then the sum $\sum\limits_{i\in I}E(S_i)$ is direct and there is $X<C$ such that $\bigoplus\limits_{i\in I}E(S_i)\oplus X=C$ as right $C$ comodules and left $C^*$-modules. We have $C^*=\prod\limits_{i\in I}E(S_i)^*\times X^*$, such that if $c^*\in E(S_i)^*$ and $x_j\in E(S_j)$, then $\Delta(x_j)=x_{j1}\otimes x_{j2}\in E(S_j)\otimes C$ so $c^*\cdot x_j=c^*(x_{j2})x_{j1}=0$ if $j\neq i$, as $c^*|_{E(S_j)}=0$, and the same holds if $c^*\in X^*$. Then if $c^*=((c_i^*)_{i\in I},c_X^*)$, and $c_j\in E(S_j)$ then $c^*\cdot c_j=c_j^*\cdot c_j$ ($c_j^*$ equals $c^*$ on $E(S_j)$ and $0$ otherwise). \\
Now consider $M=\prod\limits_{i\in I}S_i$ and take $x=(x_i)_{i\in I}\in M$, $x_i\neq 0$. If $y=(y_i)_{i\in I}\in M$ then for each $i$ we have $S_i=C^*\cdot x_i$ as $x_i\neq 0$ and $S_i$ is simple, so there is $c_i^*\in C^*$ such that $c_i^*\cdot x_i=y_i$. By the previous considerations, we may assume that $c_i^*\in E(S_i)^*$ (that is, it equals zero on all the components of the direct sum decomposition of $C$ except $E(S_i)$) and then there is $c^*\in C^*$ with $c^*|_{E(S_i)}=c_i^*|_{E(S_i)}$. Then one can easily see that $c^*\cdot x_i=c_i^*\cdot x_i=y_i$, thus we may extend this to $c*\cdot x=y$ showing that actualy $M=C^*\cdot x$. As $M$ is finitely generated, its rational part must split and must be finitely generated (as a direct summand in a finitely generated module), so it must be finite dimensional. But $\bigoplus\limits_{i\in I}S_i\subseteq(\prod\limits_{i\in I}S_i)$, and this shows that $I$ must be finite. As $C$ is quasifinite, this is equivalent to the fact that $C_0$ is finite dimensional.
\end{proof}

\begin{corollary}\label{1.noeth}
$C^*$ is a right noetherian ring.
\end{corollary}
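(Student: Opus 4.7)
The plan is to combine Proposition~\ref{1.simples} and Lemma~\ref{1.fin} to show that $C$ is artinian as a right $C$-comodule, and then transport DCC on right $C$-subcomodules of $C$ into ACC on right ideals of $C^*$ via a perp correspondence. Proposition~\ref{1.simples} gives $C_0$ finite dimensional, so $C = \bigoplus_{i=1}^n E(S_i)$ is a finite direct sum of injective envelopes of the simples; by Lemma~\ref{1.fin} each $E(S_i)$ has only finite-dimensional proper subcomodules and is therefore artinian, and hence so is the finite direct sum $C$.

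Given any ascending chain of right ideals $I_1 \subseteq I_2 \subseteq \cdots$ in $C^*$, form the descending chain of orthogonals $V_j := I_j^\perp = \{c \in C : f(c)=0\ \forall f \in I_j\}$. A direct computation using $(fg)(c) = f(c_1)g(c_2)$, valid for all $g \in C^*$ since $fg \in I_j$, forces $f(c_1)c_2 = 0$ and hence $\Delta(c) \in V_j \otimes C$, so each $V_j$ is a right $C$-subcomodule of $C$; by the DCC just established the chain stabilizes at some index $j_0$, say $V_j = V$ for all $j \geq j_0$. To conclude that $(I_j)$ itself stabilizes it suffices to prove every right ideal $I$ of $C^*$ is closed, i.e.\ $I = (I^\perp)^\perp$; passing to the quotient comodule $C/V$ (artinian, as a quotient of the artinian $C$) and identifying $V^\perp \cong (C/V)^*$ as right $C^*$-modules, closedness reduces to the following claim applied with $N = C/V$: \emph{for any artinian right $C$-comodule $N$ and any right $C^*$-submodule $\widetilde M \subseteq N^*$ with $\widetilde M^\perp = 0$ in $N$, one has $\widetilde M = N^*$.}

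The main obstacle is this claim, which I would prove by adapting the cyclicity argument of Lemma~\ref{1.fin}. Under the right $C^*$-module isomorphism $N^* \cong \Hom^C(N,C)$ sending $\phi$ to $v_\phi(n) = \phi(n_0)n_1$ (with $\varepsilon \circ v_\phi = \phi$), the implication $\phi(n) \ne 0 \Rightarrow v_\phi(n) \ne 0$ turns the hypothesis $\widetilde M^\perp = 0$ into $\bigcap_{\phi \in \widetilde M}\Ker(v_\phi) = 0$. Artinianness of $N$ then forces a finite subfamily $\phi_1,\ldots,\phi_R \in \widetilde M$ to already satisfy $\bigcap_{i=1}^R \Ker(v_{\phi_i}) = 0$, yielding an injective right comodule map $(v_{\phi_1},\ldots,v_{\phi_R}) : N \hookrightarrow C^R$. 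By injectivity of $C$ as a right $C$-comodule, every $w \in \Hom^C(N,C)$ extends along this embedding to some $\tilde w \in \Hom^C(C^R,C) = \End(C^C)^R$, exhibiting $w$ as an $\End(C^C)$-linear combination of the $v_{\phi_i}$; translated back to $N^*$, this gives $N^* = \sum_{i=1}^R \phi_i \cdot C^* \subseteq \widetilde M$, hence $\widetilde M = N^*$ as required.
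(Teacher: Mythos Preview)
Your proof is correct, but it takes a different and somewhat longer route than the paper. The paper argues directly on each summand $E(T)^*$: for any nonzero right $C^*$-submodule $I\subseteq E(T)^*$ and any $0\neq f\in I$, one has $f\cdot C^*=K^\perp$ with $K=\Ker\overline f$, and since $K\subsetneq E(T)$ is a proper subcomodule, Lemma~\ref{1.fin} makes $K$ finite dimensional, so $f\cdot C^*$ (hence $I$) has finite codimension in $E(T)^*$; this immediately gives the Noetherian property for $E(T)^*$, and the finite direct sum from Proposition~\ref{1.simples} finishes. The point is that the simple socle of $E(T)$ forces any single nonzero $\overline f$ to already have finite-dimensional kernel, so one element suffices.

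You instead first package Lemma~\ref{1.fin} as ``$C$ is artinian as a right comodule'' and then prove the full closedness statement $I=(I^\perp)^\perp$ for \emph{every} right ideal, via your claim about artinian $N$. Your claim is the natural multi-element generalization of the paper's cyclicity step: rather than one injective $v_\phi$ (which you cannot guarantee for a general artinian $N$), artinianness lets you pick finitely many $v_{\phi_i}$ with jointly trivial kernel and run the same injectivity-of-$C$ argument into $C^R$. What you gain is a clean structural statement---the perp correspondence is a bijection between right ideals of $C^*$ and right subcomodules of $C$ whenever $C$ is comodule-artinian---at the cost of a longer proof. What the paper gains is brevity: by staying inside $E(T)^*$ it never needs to formulate or prove closedness in general, and the single-generator observation makes the Noetherian conclusion almost immediate.
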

\begin{proof}
Let $T$ be a right simple comodule, $E(T)\subseteq C$ an injective envelope of $T$ and $C=E(T)\oplus X$ as right $C$ comodules. If $0\neq I<E(T)*$ a right $C^*$-submodule, then for $0\neq f\in I$ put $K={\rm Ker}\overline{f}$. We have $K^\perp=\{g\in E(T)^*\mid g_{\mid K}=0\}=f\cdot C^*\subseteq I$. Indeed, if $g$ is $0$ on $K$, then $K\subseteq {\rm Ker}\overline{g}$ as $K$ is a right $C$ subcomodule of $E(T)$ and therefore it factors through $\overline{f}$: $\overline{g}=\alpha\overline{f}=\overline{h}\overline{f}=\overline{f\cdot h}$ for $h=\varepsilon\circ\alpha$, so $g=f\cdot h\in f\cdot C^*$. As $K$ is finite dimensional by Lemma \ref{1.fin}, $K^\perp=f\cdot C^*$ has finite codimension in $E(T)^*$, showing that $I\supseteq f\cdot C^*$ has finite codimension, which obviously shows that $E(T)^*$ is Noetherian. If $C_0=\bigoplus\limits_{i\in F}T_i$ with $T_i$ simple right comodules then $F$ is finite by Proposition \ref{1.simples}, so $C^*=\bigoplus\limits_{i\in F}E(T_i)^*$ is Noetherian as each $E(T_i)^*$ are.
\end{proof}

Put $R=C^*$. Note that $J=C_0^\perp=\{\overline{f}\mid \overline{f}_{\mid C_0}=0\}$ is the Jacobson radical of $R$ and $\bigcap\limits_{n\in\NN}J^n=0$. Also if $M$ is a finite dimensional right $R$-module, we have $J^nM=0$ for some $n$, because the descending chain of submodules $(MJ^n)_n$ must stationate and therefore $MJ^n=MJ^{n+1}MJ^n\cdot J$ implies $MJ^n=0$ by Nakayama lemma.

\begin{proposition}
Any element $f\in J$ is nilpotent. 
\end{proposition}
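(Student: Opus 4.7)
The plan is to argue by contradiction. Suppose $f \in J$ is not nilpotent, so $f^n \neq 0$ for every $n \geq 1$. Under the algebra isomorphism $C^* \simeq {\rm Hom}^C(C,C)$ with opposite composition, $\overline{f^n} = \overline{f}^n$ is a nonzero comodule endomorphism of $C$, so there exists $x_n \in C$ with $x_n \cdot f^n = \overline{f}^n(x_n) \neq 0$. Let $M_n$ be the (finite-dimensional) right subcomodule of $C$ generated by $x_n$, and form the right $C^*$-module $P = \prod_{n \geq 1} M_n$ together with the distinguished element $x = (x_n)_n \in P$; this $P$ is the ``direct product of modules'' alluded to in the introduction.

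Apply the splitting hypothesis to the cyclic submodule $N := x C^* \subseteq P$. Since $N$ is finitely generated, its rational part $T := {\rm Rat}(N)$ is a finitely generated rational module, hence finite-dimensional, so by the observation recorded just before the proposition, $T J^K = 0$ for some $K \geq 1$. The splitting $N = T \oplus Z$ therefore gives $N J^K \subseteq Z$, and in particular $xf^K \in Z$. Moreover $xf^K \neq 0$: for $n \geq K$ its $n$-th component is $x_n f^K$, which must be nonzero, since otherwise $x_n f^n = (x_n f^K) \cdot f^{n-K} = 0$ would contradict the choice of $x_n$. Thus $0 \neq xf^K \in Z$, while ${\rm Rat}(Z) = Z \cap {\rm Rat}(N) = 0$.

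The decisive step, which I expect to be the main obstacle, is to derive a contradiction from the coexistence of $0 \neq xf^K \in Z$ and ${\rm Rat}(Z) = 0$. I anticipate that the argument produces a nonzero element of $Z$ annihilated by some power of $J$---hence a nonzero element of ${\rm Rat}(Z)$---by multiplying $xf^K$ on the right by a suitable $g \in C^*$. Finding such $g$ should use the noetherianness of $C^*$ (Corollary \ref{1.noeth}), the explicit structure of the coordinate embeddings $M_n \hookrightarrow P$, and possibly a secondary application of the splitting hypothesis to $P$ itself (writing $P = {\rm Rat}(P) \oplus Y$) to compare the two decompositions of $P$. If an element $g$ can be produced with $xf^K \cdot g \neq 0$ and $xf^K \cdot g \in {\rm Rat}(P)$, then $xf^K \cdot g \in Z \cap {\rm Rat}(N) = 0$ is the contradiction; ensuring $xf^K \cdot g \neq 0$ probably requires refining the choice of the $x_n$ so that the map $C^* \to P$, $h \mapsto xh$, has trivial kernel.
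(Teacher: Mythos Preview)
Your setup is in the right spirit---form a product of small modules and exploit the splitting---but the proof is genuinely incomplete: the ``decisive step'' you flag is the whole difficulty, and your speculation about producing a $g$ with $xf^Kg\in{\rm Rat}(P)$ yet $xf^Kg\neq 0$ has no mechanism behind it. With an arbitrary choice of $x_n$ satisfying only $x_nf^n\neq 0$, there is no reason the tail of $xf^K$ (or of any right multiple of it) should land in the rational part; you have no control over divisibility of the components by high powers of $f$, which is exactly what is needed.

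The paper's argument supplies precisely this missing idea, and it works on the dual side. One fixes a simple $T$ with $\overline{f}^n|_{E(T)}\neq 0$ for all $n$, sets $K_n=\Ker\overline{f^n}\subsetneq E(T)$ (finite-dimensional by Lemma~\ref{1.fin}), and forms $M=\prod_{n\geq 1}E(T)^*/K_n^\perp$, a product of finite-dimensional rational right $C^*$-modules. The crucial choice is the element $\lambda=(f^{[n/2]}|_{E(T)})_n$: because $[n/2]\geq N$ for all $n>2N$, one can write $\lambda=r_N+\mu_N\cdot f^N$ for every $N$, with $r_N$ supported in finitely many coordinates (hence rational). Writing $M={\rm Rat}\,M\oplus\Lambda$ and comparing, the $\Lambda$-component $\mu$ of $\lambda$ lies in $\Lambda\cdot f^N$ for every $N$; since each factor $E(T)^*/K_p^\perp\cong K_p^*$ is finite-dimensional and hence killed by a power of $J$, this forces $\mu=0$. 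Thus $\lambda$ is rational, so $\lambda J^n=0$ for some $n$, giving $f^{[p/2]+n}|_{K_p}=0$ for all $p$. Taking $p=2n+1$ yields $K_{2n+1}\subseteq K_{2n}$, i.e.\ the kernel chain stabilises at $m=2n$; then a simple subcomodule of $\im\overline{f}^m$ (nonzero by assumption) is killed by $\overline{f}$ (since $f\in J$), contradicting $K_{m+1}=K_m$. The half-power trick $\lambda_n=f^{[n/2]}$ is the idea your proposal is missing: it builds in, for free, the divisibility by arbitrarily high powers of $f$ that forces the element into the rational part.
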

\begin{proof}
As $C$ is a finite direct sum of injective envelopes of simple right comodules $E(T)$'s, it is enough to show that $f^n_{\mid E(T)}=0$ for some $n$ for each simple right subcomodule of $C$ and injective envelope $E(T)\subseteq C$. Assume the contrary for some fixed data $T$, $E(T)$. Let 
$$M=\prod\limits_{n\geq 1}\frac{E(T)^*}{K_n^\perp}$$
where $K_n={\rm Ker}\overline{f^n}\neq E(T)$ and $K_n^\perp=\{g\in E(T)^*\mid g_{\mid K_n}=0\}$. Note that $K_n\subseteq K_{n+1}$ Put $\lambda=(f^{[n/2]}{}_{\mid E(T)})$ where $[x]$ is the greatest integer less or equal to $x$. Note that if $u$ equals $f$ on $E(T)$ and $0$ on $\lambda$ then $f^n_{\mid E(T)}$ regarded as an element of $C^*$ equals $uf^{n-1}$ (recall that we identify $E(T)^*$ as a direct summand of $C^*$). $$\lambda=(u,uf,uf,\dots,uf^{n-1},uf^{n-1},0,\dots) +(0,0,\dots,0,uf^n,uf^n,uf^{n+1},\dots)= r_n+\mu_n\cdot f^n$$ with $r_n=(u,uf,uf,\dots,uf^{n-1},uf^{n-1},0,\dots,0\dots)$ (the morphisms are always thought restrcted to $E(T)$). But then $r_n\in\prod\limits_{p\leq n}E(T)^*/K_p^\perp\times 0$ which is a rational left $C$ comodule because $E(T)^*/K_p^\perp\simeq K_p^*$ and $K_p$ is finite dimensional. Write $M=Rat M\oplus\Lambda$ as right $R$ modules and $\mu_n=q_n+\alpha_n$ with $q_n\in Rat M$ and $\alpha_n\in\Lambda$. Then if $\lambda=r+\mu$ with $r\in Rat M$ and $\mu\in\Lambda$ we have $r+\mu=r_n+\mu_n\cdot f^n=(r_n+q_n\cdot f^n)+\alpha_n\cdot f^n$ wich shows that $\mu=\mu_n\cdot f^n$. Then if $\mu=(l_p)_{p\geq 1}$ and $\mu_n=(\mu_{n,p})_{p\geq 1}$ we get that $l_p=\mu_{n,p}\cdot f^n\in E(T)^*/K_p^\perp\cdot J^p$ for all $p$ and this shows that $l_p=0$ by the previous remark so $\mu=0$. Therefore $\lambda\in Rat M$, so $\lambda\cdot R$ is finite dimensional and again we get $\lambda\cdot RJ^n=0$ for some $n$. Hence we get $f^{[p/2]+n}{}_{\mid K_p}=0,\,\forall p$, equivalently $\overline{f}^{[p/2]+n}=0$ on $K_p$ (because $K_p$ is a right comodule). For $p=2n+1$ we therefore obtain $K_{2n+1}\subseteq K_{2n}$ so $K_m=K_{m+1}$ for $m=2n$. Then if $I=Im(\overline{f}^m$, $I\neq 0$ by the assumption and there is a simle subcomodule $T'$, $T'\subseteq I$; then $\overline{f}_{\mid T'}=0$ (because $f\in J=C_0^\perp$). Take $0\neq y\in T'$; then $y=\overline{f}^m(x)$, $x\in E(T)$ and $0=\overline{f}(y)=\overline{f}^{m+1}(x)$ showing that $x\in K_{m+1}=K_m$, therefore $y=\overline{f}^m(x)=0$, a contradiction. 
\end{proof}

\begin{theorem}
If the rational part of every right $C^*$ module splits off, then $C$ is finite dimensional.
\end{theorem}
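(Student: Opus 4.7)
The plan is to assemble the preceding results via Levitzki's theorem on nil one-sided ideals in right Noetherian rings. The previous proposition shows that every element of $J$ is nilpotent, so $J$ is a nil two-sided ideal of $R = C^*$. Since $R$ is right Noetherian by Corollary \ref{1.noeth}, Levitzki's theorem applies and gives that $J$ is in fact nilpotent: $J^n = 0$ for some $n \geq 1$.

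The next step is to observe that $R/J$ is finite dimensional over $k$. Because $J = C_0^\perp$, the restriction map $C^* \to C_0^*$ has kernel exactly $J$, inducing an injection $R/J \hookrightarrow C_0^*$. By Proposition \ref{1.simples}, $C_0$ is finite dimensional, hence so is $C_0^*$, and therefore $R/J$ is a finite dimensional $k$-algebra.

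To conclude, I would exploit the finite filtration $R \supseteq J \supseteq J^2 \supseteq \cdots \supseteq J^n = 0$. Since $R$ is right Noetherian, each right ideal $J^i$ is finitely generated, so each successive quotient $J^i/J^{i+1}$ is finitely generated as a right $R$-module. As $J$ annihilates $J^i/J^{i+1}$, it is in fact a finitely generated right module over $R/J$, and since $R/J$ is finite dimensional over $k$, so is $J^i/J^{i+1}$. Summing the dimensions across $i = 0, 1, \ldots, n-1$ shows that $R = C^*$ is finite dimensional, equivalently $C$ is finite dimensional.

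There is no real obstacle here: all of the substantive work is already done in the earlier results, and the only nontrivial external input is Levitzki's theorem, which was flagged in the introduction as the cornerstone of this alternative proof. The only subtlety is verifying that $J$ is truly a nil ideal rather than merely being generated by nilpotents, but this is precisely what the previous proposition provides.
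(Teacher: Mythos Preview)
Your argument is correct, and it reaches the conclusion by a somewhat different route than the paper. Both proofs share the first step: Corollary~\ref{1.noeth} gives that $R=C^*$ is right Noetherian, the previous Proposition makes $J$ a nil ideal, and Levitzki's theorem then forces $J^n=0$ for some $n$. From there the two arguments diverge.

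The paper returns to the comodule side: it shows inductively that each term $C_n$ of the coradical filtration is finite dimensional, by invoking Lemma~\ref{1.fin} again (a proper subcomodule of $E(T_i)$ must be finite dimensional, so each Loewy layer $s_{n+1}(E(T_i))/s_n(E(T_i))$ is finite dimensional). Since $J^n=C_n^\perp$ and $J^n=0$, one gets $C=C_n$, hence $C$ is finite dimensional. Your argument instead stays entirely on the ring side: Noetherianity makes each $J^i$ finitely generated as a right ideal, so each layer $J^i/J^{i+1}$ is a finitely generated module over $R/J$, which embeds in $C_0^*$ and is therefore finite dimensional by Proposition~\ref{1.simples}; summing over the finite filtration gives $\dim_k R<\infty$. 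Your route is slightly more economical in that it does not re-use Lemma~\ref{1.fin} and needs no discussion of the coradical filtration; the paper's route has the virtue of making the coalgebra structure visible at the end and identifying $C$ explicitly with a term of its own coradical filtration.
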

\begin{proof}
By Corollary \ref{1.noeth} $C^*$ is Noetherian and by the previous Proposition every element if $J$ is nilpotent. Therefore by Leviski's Theorem we have that $J$ is nilpotent. Now note that $C_n$ is finite dimensional for all $n$. Indeed, denoting by $s_n(M)$ the $n$-th term in the Loewy series of the comodule $M$, if $C_0=\bigoplus\limits_{i\in F}T_i$ with $T_i$ simple right comodules,  $C=\bigoplus\limits_{i\in F}E(T_i)$ with $E(T_i)$ injective envelopes of the $T_i$'s, then $C_n=\bigoplus\limits_{i\in F}s_n(E(T_i))$ and if $C_n$ is finite dimensional, then $s_{n+1}(E(T_i))$ is finite dimenional as otherwise there is a decomposition $s_{n+1}(E(T_i))/s_{n}(E(T_i))=T\oplus K$ with simple $T$ and infinite dimensional $K$ and therefore we would find an infinite dimensional subcomodule of $E(T_i)$ corresponding to $K$ which is imposible. Therefore as $J^n=0$ for some $n$ and $J^n$ has finite codimension as $J^n=C_n^\perp$ and $C_n$ is finite dimensional, we conclude that $C$ has finite dimension.
\end{proof}

\bigskip\bigskip\bigskip



\vspace*{3mm} 
\begin{flushright}
\begin{minipage}{148mm}\sc\footnotesize

Miodrag Cristian Iovanov\\
University of Southern California \\
3620 S Vermont Ave, KAP 108 \\
Los Angeles, CA 90089, USA and\\
University of Bucharest, Faculty of Mathematics, Str.
Academiei 14,
RO-70109, Bucharest, Romania\\
{\it E--mail address}: {\tt
yovanov@gmail.com, iovanov@usc.edu}\vspace*{3mm}

\end{minipage}
\end{flushright}
\end{document}